\DeclareMathOperator{\prox}{prox}
\DeclareMathOperator*{\argmin}{arg\,min}
\newcommand{\R}{\mathbb{R}}
\newcommand{\N}{\mathbb{N}}
\newcommand{\dd}{\text{d}}
\begin{document}

\title*{On a fixed-point continuation method for a convex optimization problem}
% Use \titlerunning{Short Title} for an abbreviated version of
% your contribution title if the original one is too long
\author{Jean-Baptiste Fest, Tommi Heikkilä, Ignace Loris, Ségolène Martin, Luca Ratti, Simone Rebegoldi and Gesa Sarnighausen}

% Use \authorrunning{Short Title} for an abbreviated version of
% your contribution title if the original one is too long
\authorrunning{Fest, Heikkilä, Loris, Martin, Ratti, Rebegoldi and Sarnighausen}

\institute{Jean-Baptiste Fest \at Centre de Vision Numérique, Inria, CentraleSupélec, Université Paris-Saclay, 3 Rue Joliot Curie, 91190 Gif-sur-Yvette \email{jean-baptiste.fest@centralesupelec.fr}
\and Tommi Heikkilä \at Unversity of Helsinki, Department of Mathematics and Statistics.
Pietari Kalmin Katu 5, 00560 Helsinki, Finland, \email{tommi.heikkila@helsinki.fi}
\and Ignace Loris \at Université libre de Bruxelles, Brussels, Belgium \email{Ignace.Loris@ulb.be}
\and Ségolène Martin \at Centre de Vision Numérique, Inria, CentraleSupélec, Université Paris-Saclay, 3 Rue Joliot Curie, 91190 Gif-sur-Yvette \email{segolene.martin@centralesupelec.fr}
\and Luca Ratti \at Machine Learning Genoa Center (MaLGa), Università degli studi di Genova, Via Dodecaneso 35, 16146 Genova, Italy \email{luca.ratti@unige.it}
\and Simone Rebegoldi \at Università degli studi di Firenze, Viale G.B. Morgagni 40, 50134 Firenze, Italy; INDAM-GNCS Research group, Roma, Italy \email{simone.rebegoldi@unifi.it}
\and Gesa Sarnighausen \at Georg-August-Universität Göttingen, Göttingen, Germany \email{gesa.sarnighausen@stud.uni-goettingen.de}}
%
% Use the package "url.sty" to avoid
% problems with special characters
% used in your e-mail or web address
%
\maketitle

\abstract{%    ....
We consider a variation of the classical proximal-gradient algorithm for the iterative minimization of a cost function consisting of a sum of two terms, one smooth and the other prox-simple, and whose relative weight is determined by a penalty parameter. This so-called fixed-point continuation method allows one to approximate the problem's trade-off curve, i.e. to compute the minimizers of the cost function for a whole range of values of the penalty parameter at once.  The algorithm is shown to converge, and a rate of convergence of the cost function is also derived. Furthermore, it is shown that this method is related to iterative algorithms constructed on the basis of the $\epsilon$-subdifferential of the prox-simple term. Some numerical examples are provided.    
%   
%Each chapter should be preceded by an abstract (no more than 200 words) that summarizes the content. The abstract will appear \textit{online} at \url{www.SpringerLink.com} and be available with unrestricted access. This allows unregistered users to read the abstract as a teaser for the complete chapter.\newline\indent
%Please use the 'starred' version of the \texttt{abstract} command for typesetting the text of the online abstracts (cf. source file of this chapter template \texttt{abstract}) and include them with the source files of your manuscript. Use the plain \texttt{abstract} command if the abstract is also to appear in the printed version of the book.
}

\section{Introduction}\label{sec:intro}

In this paper, we address the numerical and iterative solution of the following composite convex optimization problem.
\begin{problem}\label{problem}
Solve
\begin{equation}\label{eq:problem}
    \min_{u \in \R^d} F_\lambda(u) \equiv f(u)+\lambda g(u),
\end{equation}
where
\begin{itemize}
    \item $f:\R^d\rightarrow \R$ is convex and continuously differentiable;
    \item $\nabla f: \R^d \rightarrow \R^d$ is $L-$Lipschitz continuous;
    \item $g:\R^d\rightarrow \R\cup\{+\infty\}$ is convex, proper and lower semicontinuous;
    \item $\lambda >0$;
    \item $F_\lambda $ admits at least one minimum point $\hat u(\lambda)\in\R^d$.
\end{itemize}
\end{problem}
Furthermore, we assume that the gradient of $f$ is available for use in an iterative algorithm. Finally we also assume that the function $g$ is prox-simple, meaning that the proximal mapping $\prox_{\alpha g}$ \cite{Moreau1965,ComWa2005,Bauschke2011} can also be computed at each point of $\R^d$ and for each value of $\alpha>0$:
\begin{equation}
\prox_{\alpha g}(a)=\argmin_{u\in\R^d}\frac{1}{2}\|u-a\|_2^2+\alpha g(u).
\end{equation}

Under these conditions, the so-called proximal-gradient algorithm (and its generalizations and improvements) \cite{Beck-Teboulle-2009b,Bonettini-Loris-Porta-Prato-Rebegoldi-2017,Chen-et-al-2018,ComWa2005,Villa2013} can be applied to iteratively solve (\ref{eq:problem}). In its basic form, this algorithm reads as
\begin{equation}\label{eq:proxgrad}
\begin{cases}
    u_0 \in\R^d\\
    u_{n+1} = \prox_{\alpha \lambda g}(u_n-\alpha \nabla f(u_n)), \quad n=0,1,\ldots
\end{cases}
\end{equation}
and convergence of the sequence $(u_{n})_{n\in\N}$ to a minimizer of problem (\ref{eq:problem}) is guaranteed for any starting point $u_0$ when the step-size obeys $0<\alpha<2/L$ \cite{ComWa2005}.

Although the iterative method \eqref{eq:proxgrad} addresses 
the problem of the numerical computation of the minimizer $\hat u(\lambda)$ for a given value of $\lambda$, such a method  needs to be 
repeatedly applied if problem \eqref{eq:problem} is to be solved for several values of the penalty parameter $\lambda$. 
This is often the case when the cost function  (\ref{eq:problem}) appears in the modeling
of an inverse problem \cite{Bertero2021,Engl2000,Kirsch2011}, where the function $f$ represents a data misfit term and the function $g$ represents a penalty term that counter-balances 
the ill-posedness of the inverse problem. In this context the value of the penalty parameter $\lambda$ is not necessarily known in advance. Hence the question of a more efficient calculation of a whole family of minimizers $\{\hat u(\lambda) \ : \ \lambda_{\text{min}}\leq\lambda\leq \lambda_{\text{max}}\}$ of problem (\ref{eq:problem}) is of practical importance.

In this work, we are interested in studying the convergence properties of the sequence $(u_n)_{n\in\N}$ defined by the iterative procedure
\begin{equation}\label{eq:method}
\begin{cases}
    u_0 \in\R^d\\
    u_{n+1} = \prox_{\alpha \lambda_n g}(u_n-\alpha \nabla f(u_n)), \quad n=0,1,\ldots
\end{cases}
\end{equation}
where $\alpha>0$ is again a steplength parameter, and $\{\lambda_n\}_{n\in\N}\subseteq \R_{>0}$ is now a sequence of regularization parameters. 

Methods that employ the strategy $\lambda_n\rightarrow \lambda$ are usually called {\it fixed-point continuation algorithms}. The continuation strategy has been used in several applications in image processing, ranging from $\ell_1-$regularized minimization \cite{Hale2008}, low rank minimization \cite{Goldfarb2011}, plug-and-play algorithms \cite{Chan2017}, and tomography \cite{Bubba2020,Purisha2017}. In a more general context, approximate splitting algorithms as in \cite[Corollary 6.7]{Combettes2004} are closely related.Algorithm \eqref{eq:method} includes some of these fixed-point continuation algorithms as special cases, such as the ones in \cite{Hale2008,Goldfarb2011}. However, to the best of our knowledge, a convergence analysis of method \eqref{eq:method} (or its instances) is still missing. In this paper, we prove the convergence of the iterates of \eqref{eq:method} to a minimum point of problem \eqref{eq:problem}, as well as a rate of convergence on the function values. Furthermore, we shed light on the relation of \eqref{eq:method} with a certain class of inexact forward-backward algorithms.

The advantage of the modified (varying $\lambda_n$) proximal gradient algorithm (\ref{eq:method}) lies not in an accelerated convergence rate  as compared to the classical version (fixed $\lambda$), but in the observation that an adequate choice of the sequence $(\lambda_n)_{n\in\N}$ may enforce a useful path of the iterates in the penalty-misfit plane (the $g$-$f$ plane). Indeed, by starting the iteration with a minimizer of $F_{\lambda_\text{max}}$ and a large value of $\lambda_0=\lambda_\text{max}$, and by slowly decreasing $\lambda_n$ at every step, one can expect that each iterate $u_n$ is a good approximation of each minimizer $\hat u(\lambda_n)$ of the cost functions $F_{\lambda_n}$ up to $\lambda=\lambda_\text{min}=\lim_{n\to\infty}\lambda_n$. In this way, an approximation of the trade-off curve (also known as the L-curve \cite{Hansen2001}; see also \cite{Berg2008,Berg2011} and sections~\ref{sec:tradeoff} and \ref{sec:experiments}) can be made at the cost of computing just a single minimizer (for a single value of $\lambda$).

The paper is structured as follows. In section \ref{sec:tradeoff}, we study the properties of the trade-off curve, by which method \eqref{eq:method} is inspired. Section \ref{sec:convergence} includes the convergence analysis of the method and its connection to inexact forward--backward algorithms. In section \ref{sec:experiments}, we investigate the numerical approximation of the trade-off curve of a regularized least squares optimization problem. In section \ref{sec:conclusions}, we draw some conclusions related to our work.

\section{Trade-off curve}\label{sec:tradeoff}

Algorithm (\ref{eq:method}) draws its inspiration from an analysis of the so-called trade-off curve associated to problem (\ref{eq:problem}), and to the closely related constrained problem 
\begin{equation}\label{eq:constrainedproblem}
\min_{u\in\R^d} f(u) \quad\text{such that}\quad g(u)\leq \tau.
\end{equation}
We assume that a minimizer $\tilde u(\tau)$ of \eqref{eq:constrainedproblem} exists when the feasible set is non-empty. 
Let us introduce the value function 
\begin{equation}\label{eq:valuefunction}
\varphi(\tau)=\min_{u\in\R^d}\left\{ f(u) \quad\text{such that}\quad g(u)\leq \tau\right\}
\end{equation}
of this constrained problem and define the \emph{trade-off curve} (also known as the \emph{Pareto-curve}) as the graph of the value function (see e.g. Figure~\ref{fig1}, left panel). 

\begin{figure}
\centering\includegraphics{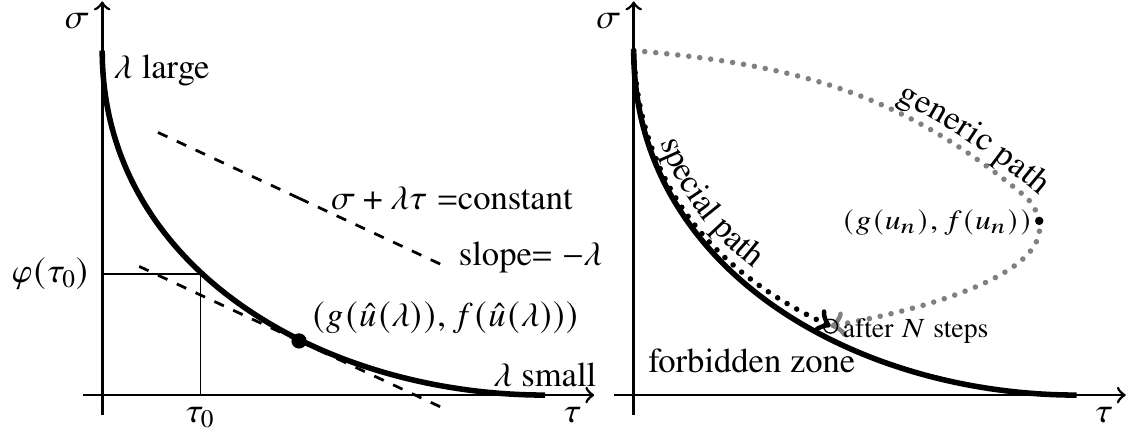}
\caption{Left: Graphical representation of the so-called trade-off curve and its relation to the penalty parameter $\lambda$. Right: Path (in the $g-f$-plane) of two different iterative optimization algorithms applied to the same instance of problem (\ref{eq:problem}) and starting from the same initial point. The black dotted path is special with respect to a generic path, as the former path approximately samples the trade-off curve (i.e. intermediate iterates have some interest) and the latter produces uninteresting intermediate iterates.}\label{fig1}
\end{figure}

\begin{property} If $f,g:\R^d\to\R\cup\{+\infty\}$ are convex, then the following statements hold true.
\begin{enumerate}
\item The value function $\varphi:\R\to\R\cup\{+\infty\}$ is non-increasing and convex.
\item The area below the curve  $(\tau,\varphi(\tau))_{\tau\in\R}$ cannot be reached by a point of the form $(g(u),f(u))$ with $u\in\R^d$.

\item If $\lambda\geq 0$, $\hat u(\lambda)$ is a solution of optimization problem (\ref{eq:problem}), and one sets $\tau=g(\hat u(\lambda))$, then $\hat u(\lambda)$ solves the constrained problem (\ref{eq:constrainedproblem}). Moreover, if $f$ and $g$ are differentiable, then $\varphi'(\tau)=-\lambda$, i.e., the slope of the trade-off curve equals $-\lambda$ at the point $(g(\hat u(\lambda)),f(\hat u(\lambda)))$.
\end{enumerate}
\end{property}
\begin{proof}
1) If $\tau_1\leq \tau_2$ one has $\{u$ s.t. $g(u)\leq\tau_1\}\subset \{u$ s.t. $g(u)\leq\tau_2\}$ and hence $\min\left\{ f(u)\ \text{s.t.}\ g(u)\leq \tau_1\right\}\geq \min\left\{ f(u)\ \text{s.t.}\ g(u)\leq \tau_2\right\}$.

Convexity is a well-known property of the value function \cite[p.~50]{Ekeland1999}. By definition of $\varphi$ one has:
\begin{displaymath}
\begin{array}{l}
\forall a_1 \text{ s. t. } \varphi(\tau_1)<a_1\quad \exists u_1\in\R^d \text{ s. t. } g(u_1)\leq \tau_1\text{ and } \varphi(\tau_1)\leq f(u_1)<a_1\\[3mm]
\forall a_2 \text{ s. t. } \varphi(\tau_2)<a_2\quad \exists u_2\in\R^d \text{ s. t. } g(u_2)\leq \tau_2\text{ and } \varphi(\tau_2)\leq f(u_2)<a_2
\end{array}
\end{displaymath}
which implies for $\mu\in[0,1]$:
\begin{displaymath}
\begin{array}{lcl}
 \displaystyle \varphi(\mu \tau_1+(1-\mu)\tau_2) & \stackrel{(\ref{eq:valuefunction})}{=} & \displaystyle 
\inf\left\{ f(u) \quad\text{with}\quad g(u)\leq \mu \tau_1+(1-\mu)\tau_2\right\} \\[3mm]
&\stackrel{g\text{ convex}}{\leq}& f(\mu u_1+(1-\mu)u_2)\\[3mm]
&\stackrel{f\text{ convex}}{\leq}& \mu  f(u_1)+(1-\mu)f(u_2)\\[3mm]
&<& \mu  a_1+(1-\mu)a_2.
\end{array}
\end{displaymath}
Since this holds for $a_1$ and $a_2$ arbitrarily close to $\varphi(\tau_1)$ and $\varphi(\tau_2)$ respectively, it follows that
\begin{displaymath}
\varphi(\mu \tau_1+(1-\mu)\tau_2)\leq \mu  \varphi(\tau_1)+(1-\mu)\varphi(\tau_2).
\end{displaymath}
Another proof is given in \cite[Theorem~2.1]{Berg2008}.

2) If there exists an element $u\in\R^d$ such that $g(u)=\tau$ and $f(u)<\varphi(\tau)$ then this is a contradiction with the definition (\ref{eq:valuefunction}) of the value function $\varphi$.

3) If $\hat{u}(\lambda)$ is a solution of (\ref{eq:problem}), then we have
\begin{displaymath}
0 \in \partial f(\hat u(\lambda)) + \lambda \partial g(\hat u(\lambda));
\end{displaymath}
moreover, assuming that $\tau = g(\hat{u}(\lambda))$, it also clearly holds that 
\begin{displaymath}
\lambda (g(\hat{u}(\lambda))-\tau)=0, \quad \lambda > 0, \quad g(\hat{u}(\lambda)) \leq \tau,
\end{displaymath}
which are the (necessary and sufficient) conditions to ensure that $\hat{u}(\lambda)$ is the solution of the constrained minimization problem (\ref{eq:constrainedproblem}), see e.g. \cite[Theorem 3.34]{ruszczynski2011nonlinear}.

Assume now that both $f$ and $g$ are differentiable. Using the normal equation $\nabla f(\hat u(\lambda))+\lambda \nabla g(\hat u(\lambda))=0$ it follows by the chain rule that:
\begin{displaymath}
\frac{\dd f(\hat u(\lambda))}{\dd g(\hat u(\lambda))} = \frac{\dd f/\dd\lambda }{\dd g/\dd \lambda} = \frac{\langle \nabla f(\hat u(\lambda)), \dd \hat u(\lambda)/\dd \lambda \rangle}{\langle\nabla g(\hat u(\lambda)), \dd \hat u(\lambda)/\dd \lambda\rangle } = -\lambda
\end{displaymath}
as announced.

\end{proof}

\begin{remark}
In the iterative algorithm (\ref{eq:method}) the sequence of parameters $\lambda_n$ is assumed to converge to the strictly positive value $\lambda$ present in problem (\ref{eq:problem}), i.e., $\lambda_n\to\lambda$. If, additionally, one imposes monotone convergence (which implies $\lambda_n>\lambda$), and one starts the iteration with a minimizer $u_0=\hat u(\lambda_0)$, one can surmise that a good approximation of the trade-off curve (slopes between $\lambda_0$ and $\lambda$) may be obtained. In this sense, the algorithm (\ref{eq:method}) follows a more interesting path to the solution of problem (\ref{eq:problem}) than a generic iterative algorithm (see Figure~\ref{fig1}, right panel). 
\end{remark}

\section{Convergence analysis}\label{sec:convergence}

The proof of convergence of algorithm (\ref{eq:method}) is similar to the proof of convergence of the classical proximal-gradient algorithm (algorithm (\ref{eq:method}) with $\lambda_n=\lambda$ constant) \cite{ComWa2005}. It is primarily based on the following three lemmas.

\begin{lemma} If $f:\R^d \rightarrow \R$ is convex with Lipschitz continuous gradient ($L$) then $\frac{1}{L}\nabla f$ is firmly non expansive:
\begin{equation}\label{fne}
\langle \frac{1}{L}\nabla f(u)-\frac{1}{L}\nabla f(v),u-v\rangle \geq \|\frac{1}{L}\nabla  f(u)-\frac{1}{L} \nabla f(v)\|_2^2
\quad\forall u,v\in\R^d
\end{equation}
\begin{proof}
See \cite[Part 2, Chapter X, Th. 4.2.2]{HiriartUrruty1993}.
\end{proof}
\end{lemma}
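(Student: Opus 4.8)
The plan is to prove \eqref{fne}, which is a special case of the Baillon--Haddad theorem, by combining the descent lemma with the elementary fact that a stationary point of a convex function is a global minimizer. I would organize the argument in three stages.

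First, I would recall the \emph{descent lemma}: since $\nabla f$ is $L$-Lipschitz, for all $a,b\in\R^d$ one has $f(b)\le f(a)+\langle\nabla f(a),b-a\rangle+\frac{L}{2}\|b-a\|_2^2$, which follows by writing $f(b)-f(a)=\int_0^1\langle\nabla f(a+t(b-a)),b-a\rangle\,\dd t$ and estimating $\langle\nabla f(a+t(b-a))-\nabla f(a),b-a\rangle\le Lt\|b-a\|_2^2$. Minimizing the right-hand side over $b$, with minimizer $b=a-\frac1L\nabla f(a)$, gives $\inf_{z\in\R^d}f(z)\le f(a)-\frac{1}{2L}\|\nabla f(a)\|_2^2$.

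Second, I would fix $v\in\R^d$ and apply this to the tilted function $\phi_v(a)=f(a)-\langle\nabla f(v),a\rangle$, which is again convex with $L$-Lipschitz gradient $\nabla\phi_v=\nabla f-\nabla f(v)$. Because $\nabla\phi_v(v)=0$ and $\phi_v$ is convex, $v$ is a global minimizer of $\phi_v$, so the inequality from the first stage applied to $\phi_v$ at $a=u$ reads $\phi_v(v)\le\phi_v(u)-\frac{1}{2L}\|\nabla f(u)-\nabla f(v)\|_2^2$ for every $u\in\R^d$; expanding $\phi_v$ and rearranging yields the ``three-point'' bound
\[
f(u)-f(v)\ \ge\ \langle\nabla f(v),u-v\rangle+\frac{1}{2L}\|\nabla f(u)-\nabla f(v)\|_2^2 .
\]

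Third, I would write the same bound with $u$ and $v$ interchanged, add the two inequalities so that the function values cancel, and obtain $\langle\nabla f(u)-\nabla f(v),u-v\rangle\ge\frac1L\|\nabla f(u)-\nabla f(v)\|_2^2$; multiplying through by $\frac1L$ gives exactly \eqref{fne}. I expect the only subtle point to be the second stage: convexity must be invoked at precisely this spot — to upgrade ``$v$ is stationary for $\phi_v$'' to ``$v$ minimizes $\phi_v$'' — since the descent lemma by itself uses no convexity at all; everything else is routine manipulation.
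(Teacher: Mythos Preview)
Your argument is correct and is the standard route to the Baillon--Haddad inequality: descent lemma, tilted function $\phi_v=f-\langle\nabla f(v),\cdot\rangle$ minimized at $v$ by convexity, then symmetrize. The paper does not actually prove the lemma; it simply cites \cite[Part~2, Chapter~X, Th.~4.2.2]{HiriartUrruty1993}, and the proof given there follows essentially the same three-step scheme you outline. So your proposal is consistent with the cited reference and has the advantage of being self-contained, whereas the paper defers entirely to the literature.
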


\begin{lemma}
    Let $h:\R^d\rightarrow \R\cup\{+\infty\}$ be a convex function. Then $u^+=\prox_{h}(u^-+\Delta)$ if and only if
    \begin{equation}\label{eq:prox_ine}
        \|u^+-u\|^2\leq \|u^--u\|^2-\|u^+-u^-\|^2+2\langle u^+-u,\Delta\rangle+ 2h(u)-2h(u^+), \quad \forall \ u\in\R^d.
    \end{equation}
\end{lemma}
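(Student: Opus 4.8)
The plan is to reduce the quadratic inequality \eqref{eq:prox_ine} to the first‑order optimality condition that characterizes the proximal point, and then to recognize the inequality as nothing but that condition after an elementary algebraic rearrangement. Recall that $u^+=\prox_h(u^-+\Delta)$ means, by definition of the proximal mapping, that $u^+$ minimizes $u\mapsto \tfrac12\|u-(u^-+\Delta)\|^2+h(u)$. Since the quadratic term is finite and differentiable everywhere, the subdifferential sum rule applies, so this is equivalent to $0\in (u^+-(u^-+\Delta))+\partial h(u^+)$, i.e. to
\[
u^- + \Delta - u^+ \in \partial h(u^+),
\]
which, spelled out via the definition of the subdifferential of the convex function $h$, is exactly the statement that $h(u)\ge h(u^+)+\langle u^-+\Delta-u^+,\,u-u^+\rangle$ for every $u\in\R^d$.

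The second ingredient is the algebraic identity
\[
\|u^+-u\|^2-\|u^--u\|^2+\|u^+-u^-\|^2-2\langle u^+-u,\Delta\rangle
= 2\langle u-u^+,\; u^-+\Delta-u^+\rangle,
\]
valid for all $u,u^-,u^+,\Delta\in\R^d$. This is a routine expansion: setting $A=u^+-u$ and $B=u^--u$, so that $A-B=u^+-u^-$, the left‑hand side becomes $\|A\|^2-\|B\|^2+\|A-B\|^2-2\langle A,\Delta\rangle = 2\|A\|^2-2\langle A,B\rangle-2\langle A,\Delta\rangle = 2\langle A,\;A-B-\Delta\rangle$, which is the right‑hand side.

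Finally I would combine the two steps: moving every term of \eqref{eq:prox_ine} except $2h(u)-2h(u^+)$ to the left and applying the identity, \eqref{eq:prox_ine} is seen to be precisely $2\langle u-u^+,\,u^-+\Delta-u^+\rangle\le 2h(u)-2h(u^+)$ for all $u$, i.e. twice the subgradient inequality displayed above. Since each step in the chain is an equivalence, both implications of the lemma follow simultaneously. I do not expect a genuine obstacle here; the only points needing a little care are the applicability of the subdifferential sum rule (ensured by the differentiability of the quadratic term) and the bookkeeping in the polarization identity.
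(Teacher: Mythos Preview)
Your proposal is correct and follows essentially the same route as the paper: both reduce the proximal characterization to the subdifferential inclusion $u^-+\Delta-u^+\in\partial h(u^+)$, write out the associated subgradient inequality, and then recognize \eqref{eq:prox_ine} as an algebraic rearrangement of that inequality via a three-squares/polarization identity. Your write-up is more explicit about the equivalence in both directions and the bookkeeping of the identity, but the argument is the same.
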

\begin{proof}
The relation $u^+=\prox_{h}(u^-+\Delta)$ is equivalent to the inclusion $u^--u^++\Delta\in \partial h(u^+)$. Hence one has:
\begin{displaymath}
h(u)\geq h(u^+)+\langle u^--u^++\Delta,u-u^+\rangle.
\end{displaymath}
The inner product $\langle u^--u^+,u-u^+\rangle$ can be re-arranged as a combination of three squares.
\end{proof}
    
\begin{lemma}\label{lemma-inequality}
Let $\{a_n\}_{n\in\N}, \{\epsilon_n\}_{n\in\N}\subset \R_{\geq 0}$ with  $\sum_n a_n < \infty$.
If one furthermore has $\epsilon_{n+1}^2 \leq \epsilon_n^2 + 2 a_n \epsilon_{n+1}$ then $\{\epsilon_n\}_{n\in\N}$ is a bounded sequence and $\epsilon_{n+1}\leq \epsilon_n+2a_n$.
\end{lemma}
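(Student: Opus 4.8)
The plan is to analyze the scalar recursion $\epsilon_{n+1}^2 \le \epsilon_n^2 + 2a_n\epsilon_{n+1}$ directly, treating it as a quadratic inequality in the unknown $\epsilon_{n+1}$. Fixing $n$ and writing $x = \epsilon_{n+1} \ge 0$, the hypothesis says $x^2 - 2a_n x - \epsilon_n^2 \le 0$, so $x$ lies between the two roots of the quadratic $x^2 - 2a_n x - \epsilon_n^2$. The larger root is $a_n + \sqrt{a_n^2 + \epsilon_n^2}$, hence
\begin{displaymath}
\epsilon_{n+1} \le a_n + \sqrt{a_n^2 + \epsilon_n^2} \le a_n + (a_n + \epsilon_n) = \epsilon_n + 2a_n,
\end{displaymath}
using $\sqrt{a_n^2 + \epsilon_n^2} \le \sqrt{(a_n+\epsilon_n)^2} = a_n + \epsilon_n$ (valid since both quantities are nonnegative). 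This already gives the second assertion $\epsilon_{n+1} \le \epsilon_n + 2a_n$.

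For boundedness, I would iterate the inequality just obtained: $\epsilon_{n} \le \epsilon_0 + 2\sum_{k=0}^{n-1} a_k \le \epsilon_0 + 2\sum_{k=0}^{\infty} a_k$ for all $n$, and the right-hand side is finite by the summability assumption $\sum_n a_n < \infty$. Hence $(\epsilon_n)_{n\in\N}$ is bounded (indeed by $\epsilon_0 + 2\sum_k a_k$), which is the first assertion.

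I do not expect a genuine obstacle here; the only point requiring a little care is the sign/monotonicity bookkeeping: one must note that since $\epsilon_{n+1} \ge 0$ and the leading coefficient of the quadratic is positive, the inequality $x^2 - 2a_n x - \epsilon_n^2 \le 0$ forces $x$ to be at most the positive root, and then that the estimate $\sqrt{a_n^2+\epsilon_n^2}\le a_n+\epsilon_n$ is applied correctly. An alternative to the explicit root formula, if one prefers to avoid it, is to complete the square as $(\epsilon_{n+1}-a_n)^2 \le \epsilon_n^2 + a_n^2$ and then take square roots, using $\sqrt{s^2+t^2}\le s+t$ for $s,t\ge 0$; this yields the same bound $\epsilon_{n+1} \le a_n + \sqrt{\epsilon_n^2+a_n^2} \le \epsilon_n + 2a_n$ with no case distinction at all.
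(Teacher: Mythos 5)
Your proposal is correct and follows essentially the same route as the paper: both rearrange the hypothesis into a quadratic statement about $\epsilon_{n+1}$ (the paper completes the square to $(\epsilon_{n+1}-a_n)^2\leq\epsilon_n^2+a_n^2$, which is your own stated alternative), deduce $\epsilon_{n+1}\leq\epsilon_n+2a_n$, and telescope using $\sum_n a_n<\infty$ to get boundedness. Your observation that $\epsilon_{n+1}-a_n\leq|\epsilon_{n+1}-a_n|$ removes the need for the paper's two-case discussion is a minor simplification, and your explicit factor of $2$ in the final bound $\epsilon_0+2\sum_k a_k$ is the correct constant.
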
    
\begin{proof}
The inequality can be rewritten as:$(\epsilon_{n+1}-a_n)^2 \leq \epsilon_n^2 + a_n^2$ which implies
\begin{displaymath}
    |\epsilon_{n+1}-a_n|\leq \sqrt{\epsilon_n^2 + a_n^2}\leq \epsilon_n + a_n.
\end{displaymath}
If $\epsilon_{n+1}-a_n\geq 0$ one finds $\epsilon_{n+1}\leq \epsilon_n+2a_n$.
If $\epsilon_{n+1}-a_n\leq 0$ one finds $\epsilon_{n+1}\leq a_n\leq \epsilon_n+2a_n$ also.
Finally, this implies $\epsilon_{n+1} \leq \epsilon_0 + \sum_{k=0}^n a_n \leq \epsilon_0 + \sum_{k=0}^\infty a_n < \infty$ independently of $n$.
\end{proof}
    
\begin{theorem}\label{thm:1} Under the assumptions of Problem~\ref{problem}, let $\{u_n\}_{n\in\N}$ be the sequence generated by algorithm \eqref{eq:method}. Assume that $\alpha\in(0,2/L)$ and that 
    \begin{equation}\label{eq:lambda}
        \overline{\lambda}=\sum_{n=0}^{\infty}|\lambda_n-\lambda|<\infty.
        \end{equation}
    Then the sequence $\{u_n\}_{n\in\N}$ converges to a solution $\hat{u}\in\R^d$ of problem \eqref{eq:problem}.
\end{theorem}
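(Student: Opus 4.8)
The plan is to mimic the classical Fejér-monotonicity argument for proximal-gradient, tracking the extra error incurred by letting $\lambda_n$ vary instead of being fixed at $\lambda$. Fix a minimizer $\hat u$ of $F_\lambda$. Write the iteration \eqref{eq:method} as $u_{n+1}=\prox_{\alpha\lambda_n g}(u_n-\alpha\nabla f(u_n))$ and apply Lemma~2 with $h=\alpha\lambda g$, $u^-=u_n$, and $\Delta=-\alpha\nabla f(u_n)+(\text{correction})$. The point is that $\prox_{\alpha\lambda_n g}(a)$ can be rewritten as $\prox_{\alpha\lambda g}(a+\delta_n)$ for a suitable perturbation; more directly, from the characterization $u_n-u_{n+1}-\alpha\nabla f(u_n)\in\alpha\lambda_n\partial g(u_{n+1})$ one gets a subgradient inequality for $g$ that differs from the one with $\lambda$ only by a factor $\lambda_n/\lambda$, which I will convert into an additive term controlled by $|\lambda_n-\lambda|$ times $|g(u_{n+1})-g(\hat u)|$ (or by a bound involving $\|u_{n+1}-\hat u\|$ via a subgradient bound).

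Carrying this out: applying the subgradient inequality at $u=\hat u$ and rearranging the inner product into three squares as in Lemma~2, together with the descent/cocoercivity inequality for $f$ coming from Lemma~1 (which, for $\alpha\in(0,2/L)$, makes the map $a\mapsto a-\alpha\nabla f(a)$ averaged and yields $\langle\nabla f(u_n)-\nabla f(\hat u),u_{n+1}-\hat u\rangle$-type control with a negative $\|u_{n+1}-u_n\|^2$ margin), I expect to reach an inequality of the form
\begin{equation*}
\|u_{n+1}-\hat u\|^2\le\|u_n-\hat u\|^2-c\|u_{n+1}-u_n\|^2+2a_n\|u_{n+1}-\hat u\|,
\end{equation*}
where $c=c(\alpha,L)>0$ and $a_n=C|\lambda_n-\lambda|$ for a constant $C$ depending on a bound for $g(u_{n+1})-g(\hat u)$ or on a subgradient of $g$ at $\hat u$. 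The subtlety is that the error coefficient $a_n$ may itself involve $\|u_{n+1}-\hat u\|$, so the clean way is to first derive a version with $a_n$ proportional to $|\lambda_n-\lambda|\,(1+\|u_{n+1}-\hat u\|)$, set $\epsilon_n=\|u_n-\hat u\|$, and apply Lemma~3 (with a harmless rescaling) to conclude that $\{\epsilon_n\}$ is bounded and $\epsilon_{n+1}\le\epsilon_n+2a_n$ with $\sum_n a_n<\infty$ by \eqref{eq:lambda}; boundedness then lets one absorb the $\|u_{n+1}-\hat u\|$ factor and obtain a genuine quasi-Fejér inequality $\|u_{n+1}-\hat u\|^2\le\|u_n-\hat u\|^2-c\|u_{n+1}-u_n\|^2+b_n$ with $\sum_n b_n<\infty$.

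From quasi-Fejér monotonicity the endgame is standard: $\|u_n-\hat u\|$ converges for every minimizer $\hat u$ (so $\{u_n\}$ is bounded), and summing the telescoped inequality gives $\sum_n\|u_{n+1}-u_n\|^2<\infty$, hence $u_{n+1}-u_n\to0$. Taking a convergent subsequence $u_{n_k}\to\bar u$, passing to the limit in the inclusion $\frac1\alpha(u_{n_k}-u_{n_k+1})-\nabla f(u_{n_k})\in\lambda_{n_k}\partial g(u_{n_k+1})$ — using $u_{n_k+1}-u_{n_k}\to0$, continuity of $\nabla f$, $\lambda_{n_k}\to\lambda$, and closedness of the graph of $\partial g$ — yields $0\in\nabla f(\bar u)+\lambda\partial g(\bar u)$, i.e. $\bar u$ is a solution of \eqref{eq:problem}. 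Finally, since $\|u_n-\hat u\|$ converges for this particular minimizer $\bar u$ and a subsequence tends to $0$, the whole sequence converges to $\bar u$. The main obstacle I anticipate is the first one: cleanly isolating the perturbation due to $\lambda_n\neq\lambda$ as an additive term of the exact shape required by Lemma~3 — in particular handling the possible dependence of that term on $\|u_{n+1}-\hat u\|$ before boundedness is known; the device of running Lemma~3 first on a crude inequality to get boundedness, then re-deriving the sharp one, is what I would rely on.
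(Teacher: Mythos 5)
Your overall architecture matches the paper's: a quasi-Fej\'er inequality of the form $\|u_{n+1}-\hat u\|^2\le\|u_n-\hat u\|^2-c\|u_{n+1}-u_n\|^2+2a_n\|u_{n+1}-\hat u\|$ with $\sum_n a_n<\infty$, boundedness via Lemma~3, summability of $\|u_{n+1}-u_n\|^2$, extraction of a convergent subsequence whose limit is identified as a minimizer, and then whole-sequence convergence. The endgame (steps from boundedness onward) is sound and essentially identical to the paper's, modulo your use of the closedness of $\partial g$ where the paper uses continuity of the prox-gradient map.

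The genuine gap is in the one step you yourself flag as the main obstacle, and your proposed resolution does not close it. If you combine the two prox inequalities (at $u_{n+1}$ with parameter $\lambda_n$, and at $\hat u$ with parameter $\lambda$) \emph{additively}, the perturbation term is $2\alpha(\lambda_n-\lambda)\bigl(g(\hat u)-g(u_{n+1})\bigr)$, and you must control $|g(\hat u)-g(u_{n+1})|$ \emph{before} knowing the sequence is bounded. A subgradient of $g$ at $\hat u$ only bounds one side, $g(\hat u)-g(u_{n+1})\le\|\xi\|\,\|u_{n+1}-\hat u\|$; the other side requires a subgradient at $u_{n+1}$, whose norm (read off from the prox optimality condition) involves $\|u_n-u_{n+1}\|/\alpha\lambda_n$ and $\|\nabla f(u_n)\|/\lambda_n$ and is therefore not a priori bounded --- and for extended-real-valued $g$ there is no Lipschitz bound to fall back on. Moreover, even the ``crude'' inequality with $a_n\propto|\lambda_n-\lambda|(1+\|u_{n+1}-\hat u\|)$ is not covered by Lemma~3 as stated (the coefficient multiplies $\epsilon_{n+1}^2$, so you would need a strengthened, Gronwall-type variant), and your claim that $\prox_{\alpha\lambda_n g}(a)$ can be rewritten as $\prox_{\alpha\lambda g}(a+\delta_n)$ is false in general. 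The paper avoids all of this with one algebraic device: it adds the first prox inequality to $\lambda_n/\lambda$ \emph{times} the second, so the $g$-terms cancel exactly and the entire perturbation collapses onto $\tfrac{2\alpha}{\lambda}(\lambda_n-\lambda)\langle u_{n+1}-\hat u,\nabla f(\hat u)\rangle$, which Cauchy--Schwarz bounds by $2C|\lambda_n-\lambda|\,\|u_{n+1}-\hat u\|$ with $C=\tfrac{\alpha}{\lambda}\|\nabla f(\hat u)\|$ independent of $n$ --- exactly the form Lemma~3 requires. Without this (or an equivalent) device, your derivation of the quasi-Fej\'er inequality does not go through as written.
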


\begin{proof}
Let $n\in \N$ and $\hat{u}$ a minimizer of $F_\lambda$. We start by applying twice Lemma 1 considering first $(u^+, u^-, u, \Delta, h ) = (u_{n+1}, u_n, \hat{u} -\alpha \nabla f (u_n), \alpha \lambda_n g )$ and then $(u^+, u^-, u, \Delta, h ) = (\hat{u}, \hat{u}, u_{n+1}, -\alpha \nabla f (\hat{u}), \alpha \lambda g )$. This gives the two following relations 
\begin{align}
     \|u_{n+1}-\hat{u}\|^2&\leq \|u_n-\hat{u}\|^2-\|u_{n+1}-u_n\|^2 \nonumber\\
     &\qquad-2\alpha\langle u_{n+1}-\hat{u},\nabla f(u_n)\rangle+ 2\alpha \lambda_n \left(g(\hat{u})-g(u_{n+1})\right), \nonumber\\
      0 &\leq  2 \alpha \langle u_{n+1}-\hat{u},\nabla f (\hat{u})\rangle+ 2\alpha \lambda \left( g(u_{n+1})-g(\hat{u}) \right).\nonumber
\end{align}
Combining the first relation and $\lambda_n/\lambda$ times the second, one finds:
\begin{align}
    \|u_{n+1}-\hat{u}\|^2&\leq \|u_n-\hat{u}\|^2-\|u_{n+1}-u_{n}\|^2+\frac{2\alpha}{\lambda}\langle u_{n+1}-\hat{u},\lambda_n \nabla f(\hat{u})-\lambda\nabla f (u_{n})\rangle \nonumber\\
    & =\|u_n-\hat{u}\|^2-\|u_{n+1}-u_{n}\|^2+\frac{2\alpha}{\lambda}\langle u_{n+1}-\hat{u},\lambda_n \nabla f(\hat{u})-\lambda\nabla f (\hat{u})\rangle \nonumber\\
    &\qquad\qquad\qquad+\frac{2\alpha}{\lambda}\langle u_{n+1}-\hat{u},\lambda \nabla f(\hat{u})-\lambda\nabla f (u_{n})\rangle \nonumber\\
    &=\|u_n-\hat{u}\|^2-\|u_{n+1}-u_{n}\|^2+\frac{2\alpha}{\lambda}(\lambda_n-\lambda)\langle u_{n+1}-\hat{u},\nabla f(\hat{u})\rangle \nonumber \\
& \qquad\qquad    +2\alpha\langle u_{n+1}-\hat{u}, \nabla f(\hat{u})-\nabla f (u_{n})\rangle.\nonumber
\end{align}
The last inner product can be bounded above by
\begin{displaymath}
\begin{array}{lcl}
\displaystyle\langle\hat u- u_{n+1},\nabla f(u_n)-\nabla f(\hat u)\rangle
&=& \displaystyle\langle\hat u- u_{n},\nabla f(u_n)-\nabla f(\hat u)\rangle\\
&&\displaystyle+\langle u_n- u_{n+1},\nabla f(u_n)-\nabla f(\hat u)\rangle\\[3mm]
&\stackrel{(\ref{fne})}{\leq}&\displaystyle\frac{-1}{L}\|\nabla f(u_n)-\nabla f(\hat u)\|_2^2\\
&&\displaystyle\qquad+\langle u_n- u_{n+1},\nabla f(u_n)-\nabla f(\hat u)\rangle\\[3mm]
&=&\displaystyle\langle \sqrt{L}(u_n- u_{n+1})-\frac{1}{\sqrt{L}}(\nabla f(u_n)-\nabla f(\hat u)),\\
&&\displaystyle\qquad\qquad \frac{1}{\sqrt{L}}(\nabla f(u_n)-\nabla f(\hat u))\rangle\\[3mm]
{\footnotesize\langle a,b\rangle=\frac{\|a+b\|_2^2-\|a-b\|_2^2}{4}}&=&\displaystyle \frac{L}{4}\|u_n- u_{n+1}+0\|_2^2-\frac{1}{4}\|u_n- u_{n+1}-\frac{2}{\sqrt{L}}\ldots\|_2^2\\[3mm]
&\leq&\displaystyle \frac{L}{4}\|u_n- u_{n+1}\|_2^2.
\end{array}
\end{displaymath}
Hence one finds:
\begin{align}
  \|u_{n+1}-\hat{u}\|^2&\leq \|u_n-\hat{u}\|^2-\|u_{n+1}-u_{n}\|^2+\frac{2\alpha}{\lambda}(\lambda_n-\lambda)\langle u_{n+1}-\hat{u},\nabla f(\hat{u})\rangle \nonumber\\
& \qquad\qquad    +2\alpha \frac{L}{4}\|u_n-u_{n+1}\|^2 \nonumber\\
    &=\|u_n-\hat{u}\|^2-(1-\frac{\alpha}{2L})\|u_{n+1}-u_{n}\|^2 \nonumber\\
    & \qquad +\frac{2\alpha}{\lambda}(\lambda_n-\lambda)\langle u_{n+1}-\hat{u},\nabla f(\hat{u})\rangle.\nonumber
\end{align}
Using Cauchy-Schwartz on the last scalar product, we finally have
\begin{equation}\label{fundamentalinequality}
    \|u_{n+1}-\hat{u}\|^2\leq 
    \|u_n-\hat{u}\|^2-(1-\frac{\alpha}{2L})\|u_{n+1}-u_{n}\|^2+2C|\lambda_n-\lambda|\times \|u_{n+1}-\hat{u}\|
\end{equation}
with $C$ independent of $n$. Lemma~\ref{lemma-inequality} implies that the sequence $(u_n)_{n}$ is bounded when $0<\alpha<2/L$. Hence there is a converging subsequence $u_{n_j}\stackrel{j\to\infty}{\rightarrow}u^\dagger$.

Using the boundedness of the sequence $(u_n)_n$, relation (\ref{fundamentalinequality}) implies
\begin{displaymath}
    (1-\frac{\alpha}{2L})\|u_{n+1}-u_{n}\|^2\leq 
    \|u_n-\hat{u}\|^2-\|u_{n+1}-\hat{u}\|^2+2\tilde C|\lambda_n-\lambda|
\end{displaymath}
and
\begin{displaymath}
    (1-\frac{\alpha}{2L})\sum_{n=0}^N\|u_{n+1}-u_{n}\|^2\leq 
    \sum_{n=0}^N\|u_n-\hat{u}\|^2-\|u_{n+1}-\hat{u}\|^2+2\tilde C|\lambda_n-\lambda|\leq C_2
\end{displaymath}
independently of $N$. Hence $\|u_{n+1}-u_n\|\to 0$ for $n\to\infty$.

Thus $u_{n_j+1}$ also tends to $u^\dagger$ and the continuity of the right hand side of (\ref{eq:method}) implies that $u^\dagger$ satisfies the fixed point relation (is a minimizer).
The fundamental inequality (\ref{fundamentalinequality}) is valid for $\hat u=u^\dagger$ and it implies (using the lemma~\ref{lemma-inequality})
\begin{displaymath}
    \|u_{n+1}-u^\dagger\|\leq \|u_n- u^\dagger\|+2 a_n
\end{displaymath}
(with $a_n=C |\lambda_n-\lambda |$) and
\begin{displaymath}
    \|u_{N}-u^\dagger\|\leq \|u_M- u^\dagger\|+2\sum_{k=M}^N a_n
\end{displaymath}
which implies convergence of the whole sequence.
\end{proof}

\begin{theorem}\label{thm:2}
Let $\{u_n\}_{n\in\N}$ be the sequence generated by \eqref{eq:method} with $\alpha \in (0,1/L)$ and where condition \eqref{eq:lambda} holds. Let $\hat{u}$ be a solution to problem \eqref{eq:problem}. Then the following convergence rate on the cost function value is obtained
\begin{equation}\label{eq:rate}
    F_\lambda\left(\frac{1}{n+1}\sum_{i=0}^n u_{i+1}\right)-F_\lambda(\hat{u})\leq \frac{\|u_0-\hat{u}\|^2 + M\overline{\lambda}}{2\alpha(n+1)}, \quad n=0,1,\ldots
\end{equation}
with $M  = \sup_{i\in \N}|g(u_i) - g(\hat{u})|$. 
\end{theorem}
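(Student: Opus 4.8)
The plan is to derive a single-step estimate of the form
\begin{equation*}
2\alpha\big(F_\lambda(u_{n+1})-F_\lambda(\hat u)\big)\;\le\;\|u_n-\hat u\|^2-\|u_{n+1}-\hat u\|^2+2\alpha M\,|\lambda_n-\lambda|,
\end{equation*}
to sum it over $n=0,\dots,N$ so that the quadratic terms on the right telescope, and to conclude with Jensen's inequality applied to the convex function $F_\lambda$ evaluated at the average $\frac{1}{N+1}\sum_{i=0}^{N}u_{i+1}$.

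To obtain this single-step estimate, I would first apply Lemma~2 with $h=\alpha\lambda_n g$, $u^-=u_n$, $\Delta=-\alpha\nabla f(u_n)$, $u^+=u_{n+1}$ and test point $u=\hat u$, which gives
\begin{equation*}
\|u_{n+1}-\hat u\|^2\le\|u_n-\hat u\|^2-\|u_{n+1}-u_n\|^2-2\alpha\langle u_{n+1}-\hat u,\nabla f(u_n)\rangle+2\alpha\lambda_n\big(g(\hat u)-g(u_{n+1})\big).
\end{equation*}
Then I would convert the gradient inner product into a gap in the values of $f$: combining the descent lemma $f(u_{n+1})\le f(u_n)+\langle\nabla f(u_n),u_{n+1}-u_n\rangle+\frac{1}{2\alpha}\|u_{n+1}-u_n\|^2$ (here the hypothesis $\alpha<1/L$ enters) with the convexity inequality $f(u_n)\le f(\hat u)+\langle\nabla f(u_n),u_n-\hat u\rangle$ yields $-2\alpha\langle\nabla f(u_n),u_{n+1}-\hat u\rangle\le -2\alpha\big(f(u_{n+1})-f(\hat u)\big)+\|u_{n+1}-u_n\|^2$. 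Substituting this bound makes the two occurrences of $\|u_{n+1}-u_n\|^2$ cancel.

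Next I would write $\lambda_n=\lambda+(\lambda_n-\lambda)$ in the last term, so that $-2\alpha\big(f(u_{n+1})-f(\hat u)\big)+2\alpha\lambda\big(g(\hat u)-g(u_{n+1})\big)=-2\alpha\big(F_\lambda(u_{n+1})-F_\lambda(\hat u)\big)$, while the leftover is exactly $2\alpha(\lambda_n-\lambda)\big(g(\hat u)-g(u_{n+1})\big)$, which I bound by $2\alpha M\,|\lambda_n-\lambda|$ using the definition of $M$. This produces the single-step estimate. Summing over $n=0,\dots,N$, discarding the nonpositive term $-\|u_{N+1}-\hat u\|^2$, and using $\sum_n|\lambda_n-\lambda|=\overline{\lambda}$ from \eqref{eq:lambda}, one gets $\sum_{n=0}^{N}\big(F_\lambda(u_{n+1})-F_\lambda(\hat u)\big)\le\frac{1}{2\alpha}\|u_0-\hat u\|^2+M\overline{\lambda}$; dividing by $N+1$ and applying Jensen's inequality to $F_\lambda$ at $\frac{1}{N+1}\sum_{i=0}^{N}u_{i+1}$ gives \eqref{eq:rate} (the exact placement of the factor $2\alpha$ in front of $M\overline{\lambda}$ being a matter of how the perturbation term is estimated).

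The main obstacle is the discrepancy between the regularization weight $\lambda_n$ appearing inside the proximal step and the fixed weight $\lambda$ defining the functional $F_\lambda$ whose decrease we want to monitor: the decomposition $\lambda_n=\lambda+(\lambda_n-\lambda)$ reduces this to a perturbation that is summable thanks to \eqref{eq:lambda}, provided one has the uniform bound $M=\sup_{i}|g(u_i)-g(\hat u)|<\infty$. The finiteness of $M$ itself deserves a word, and follows from the boundedness of $(u_n)_{n\in\N}$ established in Theorem~\ref{thm:1} together with the behaviour of $g$ along the (convergent) sequence of iterates. Everything else is the standard ergodic analysis of the proximal-gradient method.
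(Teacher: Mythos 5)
Your proposal is correct and follows essentially the same route as the paper's proof: the same application of Lemma~2 with $h=\alpha\lambda_n g$ at the test point $\hat u$, the same use of convexity plus the descent lemma to turn the gradient term into $f(u_{n+1})-f(\hat u)$ (your $\tfrac{1}{2\alpha}$ constant versus the paper's $\tfrac{L}{2}$ followed by discarding $-(1-\alpha L)\|u_{n+1}-u_n\|^2$ is an immaterial bookkeeping difference), the same splitting $\lambda_n=\lambda+(\lambda_n-\lambda)$, and the same telescoping-plus-Jensen conclusion. Your parenthetical about the placement of the factor $2\alpha$ in front of $M\overline{\lambda}$ is well taken: carried through rigorously the perturbation sum is $2\alpha M\overline{\lambda}$, whereas the paper silently drops the $2\alpha$ between its equations \eqref{eq:thm2_3} and \eqref{eq:thm2_4}, so your version of the constant is the one actually justified by the argument.
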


\begin{proof}
By applying Lemma \eqref{eq:prox_ine} with $u^+ = u_{n+1}$, $u^{-} = u_n$, $u=\hat u$, $\Delta = -\alpha \nabla f(u_n)$, $h = \alpha \lambda_n  g$ for a given $n\in \N$ leads to
\begin{align}
        \|u_{n+1}-\hat{u}\|^2 &\leq \|u_n-\hat{u}\|^2-\|u_{n+1}-u_n\|^2 \nonumber\\
        &\qquad +2\alpha\left [\langle \hat{u}-u_{n+1},\nabla f (u_n)\rangle + \lambda_n g(\hat{u})-\lambda_n g(u_{n+1}) \right] . \label{eq:thm2_1}
\end{align}
We now need to give a majoration of the terms between brackets we denote by $\Gamma_n$. To do so, we use the classical convexity inequality and descent lemma both applied to $f$. 
\begin{align}\label{eq:thm2_2}
    \Gamma_n
    &= \langle \hat{u}- u_n,\nabla f (u_n)\rangle + \langle u_n - u_{n+1},\nabla f (u_n)\rangle + \lambda_n g(\hat{u})- \lambda_n g(u_{n+1})\nonumber\\
    &\leq f(\hat{u}) - f(u_n) + \langle u_n-u_{n+1},\nabla f (u_n)\rangle + \lambda_n g(\hat{u})-\lambda_n g(u_{n+1}) \nonumber\\
    & = f(\hat{u}) + \lambda_n g(\hat{u}) + \frac{L}{2}\|u_{n+1} - u_n\|^2  \nonumber\\
    &\qquad - \left[f(u_n) + \langle u_{n+1}-u_n,\nabla f (u_n) \rangle + \frac{L}{2}\|u_{n+1} - u_n\|^2 \right] - \lambda_n g(u_{n+1}) \nonumber\\
    & \leq f(\hat{u}) + \lambda_n g(\hat{u}) + \frac{L}{2}\|u_{n+1} - u_n\|^2  -f(u_{n+1}) - \lambda_n g(u_{n+1}) \nonumber\\
    & = F_\lambda(\hat{u}) - F_\lambda(u_{n+1}) + \frac{L}{2}\|u_{n+1} - u_n\|^2 + (\lambda- \lambda_n) \left(g(u_{n+1})-g(\hat{u})\right). 
\end{align}
Replacing \eqref{eq:thm2_2} in \eqref{eq:thm2_1} and using the fact that $1-\alpha L \in [0,1)$ gives 
\begin{align}\label{eq:thm2_2b}
    \|u_{n+1}-\hat{u}\|^2&\leq \|u_n-\hat{u}\|^2-\|u_{n+1}-u_n\|^2 \nonumber\\
    & 
    + 2\alpha \left[ F_\lambda(\hat{u}) - F_\lambda(u_{n+1}) + \frac{L}{2}\|u_{n+1} - u_n\|^2 + (\lambda- \lambda_n) \left(g(u_{n+1})-g(\hat{u})\right) \right] \nonumber\\
    & = \|u_n-\hat{u}\|^2-(1 - \alpha L) \|u_{n+1} - u_n\|^2 + 2\alpha \left( F_\lambda(\hat{u}) - F_\lambda(u_{n+1}) \right)\nonumber\\
    & \qquad + 2\alpha  (\lambda- \lambda_n) \left(g(u_{n+1})-g(\hat{u})\right) \nonumber\\
    &\leq \|u_n-\hat{u}\|^2 + 2\alpha \left( F_\lambda(\hat{u}) - F_\lambda(u_{n+1}) \right) + 2\alpha  (\lambda- \lambda_n) \left(g(u_{n+1})-g(\hat{u})\right)\nonumber
\end{align}
which leads to
\begin{equation} \label{eq:thm2_3}
    2\alpha \left( F_\lambda(u_{n+1}) - F_\lambda(\hat{u}) \right) \leq \|u_n-\hat{u}\|^2 - \|u_{n+1}-\hat{u}\|^2 + 2\alpha  (\lambda- \lambda_n) \left(g(u_{n+1})-g(\hat{u})\right). 
\end{equation}
Summing relation \eqref{eq:thm2_3} from 0 to an arbitrary $n\in \N$ yields:
\begin{align}\label{eq:thm2_4}
    2 \alpha \sum_{i=0}^n F_\lambda(u_{i+1}) - 2\alpha (n+1) F_\lambda(\hat{u})  &\leq \|u_0 - \hat{u} \|^2 - \|u_{n+1} - \hat{u} \|^2 \nonumber\\
    &\qquad +  \sum_{i=0}^n (\lambda- \lambda_i)\left(g(u_{i+1})-g(\hat{u})\right) \nonumber\\ 
    &\leq \|u_0 - \hat{u} \|^2 +  M \overline{\lambda}. 
\end{align}
The convexity of $F_\lambda$ (as a positive linear combination of $f,g$ convex) enables to invoke the Jensen's inequality so as to lower-bound the left term of \eqref{eq:thm2_4}. We thus deduce that
\begin{equation}
    2\alpha (n+1) F_\lambda\left(\frac{1}{n+1}\sum_{i=0}^n u_{i+1}\right) - 2\alpha (n+1) F_\lambda(\hat{u}) \leq  \|u_0 - \hat{u} \|^2 +  M \overline{\lambda} 
\end{equation}
and $\eqref{eq:rate}$ is obtained by simply making the division by $2\alpha(n+1)$. 
\end{proof}

\begin{remark}
Under the additional assumption that $g$ is continuous on the entire domain, method \eqref{eq:method} can be interpreted as an inexact forward--backward algorithm applied to problem \eqref{eq:problem}, by employing the concept of {\it $\epsilon_n-$approximation of $1-$type} \cite{Salzo2012,Villa2013}. For any given $x\in\R^d$, let us introduce the function
\[
\varphi_\lambda^{(x)}(z) = \frac{1}{2} \| z-x\|^2 + \lambda g(z).
\]
Let $y\in\R^d$ be the (exact) proximal point of $\lambda g$ evaluated at $x$, which is defined by minimizing $\varphi_\lambda^{(x)}$:
\[
 y = \prox_{\lambda g}(x) \quad \Leftrightarrow \quad y = \argmin_{z} \varphi_\lambda^{(x)}(z)  \quad \Leftrightarrow \quad 0 \in \partial \varphi_\lambda^{(x)}(y) .
\]
Then, given $\epsilon>0$, an $\epsilon-$approximation of $1-$type of $y$ is any point $\tilde{y}\in\R^d$ such that
\[
\tilde{y} \approx_{1}^{\epsilon} y \quad \Leftrightarrow \quad 0 \in \partial_\varepsilon \varphi_\lambda^{(x)}(\tilde{y}),
\]
where the $\epsilon-$subdifferential is defined as
% Here I changed \varepsilon to \epsilon since it is used everywhere else (even though \varepsilon is way better looking imo.)
\[
\partial_\epsilon F(z) = \{ \xi\in\R^d: F(x)\geq F(z) + \langle \xi,x-z\rangle -\epsilon, \ \forall x \in \R^n \}.
\]
For each $n$, we define
\begin{align*}
    \varphi^{(n)}_\lambda(u)&=\frac{1}{2}\|u-(u_n-\alpha\nabla f(u_n))\|^2+\lambda\alpha g(u)\\
    \varphi_{\lambda_n}^{(n)}(u)&=\frac{1}{2}\|u-(u_n-\alpha\nabla f(u_n))\|^2+\lambda_n\alpha g(u)\\
        u_{\lambda}^{(n)}&= \argmin_u \ \varphi_\lambda^{(n)}(u)=\prox_{\lambda\alpha g}(u_n-\alpha\nabla f(u_n)).
\end{align*}
We note that $\varphi^{(n)}_\lambda$ is the function to be minimized at each step of the forward--backward algorithm applied to problem \eqref{eq:problem}, $u_{\lambda}^{(n)}$ is the exact proximal-gradient point obtained by minimizing $\varphi^{(n)}_\lambda$, and $\varphi_{\lambda_n}^{(n)}$ is the function that our proposed method \eqref{eq:method} minimizes in place of $\varphi^{(n)}_\lambda$. Then, we can write down the following implications:
\begin{align*}
    u_{n+1}&=\prox_{\alpha\lambda_n g}(u_n-\alpha \nabla f(u_n)) \\
    &\Leftrightarrow \quad u_{n+1} = \argmin_u \ \varphi_{\lambda_n}^{(n)}(u)\\
    &\Leftrightarrow \quad 0\in\partial \varphi_{\lambda_n}^{(n)}(u_{n+1}) \\
    &\Leftrightarrow \quad \varphi_{\lambda_n}^{(n)}(u)\geq \varphi_{\lambda_n}^{(n)}(u_{n+1}), \quad \forall \ u\in\R^d\\
    &\Rightarrow \quad \varphi_{\lambda_n}^{(n)}(u_{\lambda}^{(n)})\geq \varphi_{\lambda_n}^{(n)}(u_{n+1})\\
    &\Rightarrow \quad \varphi_{\lambda}^{(n)}(u_{\lambda}^{(n)})\geq \varphi_{\lambda}^{(n)}(u_{n+1})+\alpha(\lambda-\lambda_n)(g(u_{\lambda}^{(n)})-g(u_{n+1}))\\
    &\Rightarrow \quad \varphi_{\lambda}^{(n)}(u)\geq \varphi_{\lambda}^{(n)}(u_{n+1})-\alpha|\lambda-\lambda_n|\times |g(u_{\lambda}^{(n)})-g(u_{n+1})|, \quad \forall \ u\in\R^d,
\end{align*}
where the last inequality follows from the fact that $u_{\lambda}^{(n)}$ is the unique minimizer of $\varphi_{\lambda}^{(n)}$. From the continuity of the operator $T(\alpha,\lambda,u)=\prox_{\alpha \lambda g}(u-\alpha \nabla f(u))$ with respect to $\lambda,u$, the boundedness of $\{u_n\}_{n\in\N}$, and the fact that $\lambda_n\rightarrow \lambda$, it follows that the sequence $\{u_{\lambda}^{(n)}\}_{n\in\N}$ is bounded. Since $g$ is continuous by assumption, we conclude that $|g(u_{\lambda}^{(n)})-g(u_{n+1})|$ is also bounded. Denoting with $M=\sup_n |g(u_{\lambda}^{(n)})-g(u_{n+1})|$, we have
\begin{align}
    u_{n+1}&=\prox_{\alpha\lambda_n g}(u_n-\alpha \nabla f(u_n))\nonumber \\
    &\Rightarrow \quad \varphi_{\lambda}^{(n)}(u)\geq \varphi_{\lambda}^{(n)}(u_{n+1})-\alpha M|\lambda-\lambda_n|, \quad \forall \ u\in\R^d,\nonumber\\
    &\Rightarrow \quad 0\in\partial_{\epsilon_n} \varphi_{\lambda}^{(n)}(u_{n+1}), \quad \text{where }\epsilon_n=\alpha M|\lambda-\lambda_n|\nonumber\\
    &\Rightarrow \quad u_{n+1}\approx_1^{\epsilon_n} \prox_{\alpha \lambda g}(u_n-\alpha \nabla f(u_n)).\label{eq:equivalent}
\end{align}
In the above form \eqref{eq:equivalent}, method \eqref{eq:method} can be interpreted as a special instance of the inexact proximal-gradient method proposed in \cite[Equation 4]{Schmidt2011a}. In this light, the convergence rate provided in our Theorem \ref{thm:2} is coherent with the more general result \cite[Proposition 1]{Schmidt2011a}, which is also given in terms of the function value attained by the average of the iterates, although the constant multiplying the term $1/(n+1)$ is different from the one in our result. On the other hand, the convergence of the iterates is not given for the general method in \cite{Schmidt2011a}, whereas here we are able to guarantee convergence for the specific method \eqref{eq:method}.
\end{remark}

\section{Numerical experiments}\label{sec:experiments}

In order to support the theoretical arguments of section~\ref{sec:tradeoff}, we perform some numerical experiments demonstrating the described behaviour of the trade-off curve and of the iterates of algorithm (\ref{eq:method}). Our test problem is a simple deconvolution and denoising problem. A $128 \times 128$ greyscale image is degraded by convolving it with a $5 \times 5$ convolution kernel.
Furthermore the blurred image is corrupted with standard Gaussian noise scaled by $0.03$.

If one assumes that the original image has a sparse wavelet decomposition, one may try to recover the original image by solving the following $\ell_1$-norm penalized least squares optimization problem:
\begin{align}\label{eq:otimizationproblem}
    \hat{u} \in \argmin_{u} \lVert A W^* u - x_0 \rVert_2^2 + \lambda \lVert u \rVert_1,
\end{align}
where $A$ is the known blur matrix, $W$ is a 2D orthogonal wavelet transform (using Daubechies 3 wavelets \cite{Daubechies1992}), $W^*$ the corresponding inverse transform and $x_0$ the degraded image. The solution $\hat{u}$ of the problem~\eqref{eq:otimizationproblem} is then the restored image in the wavelet domain, which gives us the restored image $\hat{x} = W^*\hat{u}$.

We use algorithm~\eqref{eq:method} to reconstruct the original image using different sequences $\{\lambda_n\}_{n \in \N}$, which lead to a variant of the famous iterative soft-thresholding algorithm (ISTA) \cite{daubechies2004}. Indeed by setting
\begin{displaymath}
    %\label{eq:functionF}
    f(u) = \lVert A W^* u - x_0 \rVert_2^2, 
%\label{eq:functionG}
    \qquad g(u) = \lVert u \rVert_1,
\end{displaymath}
we find that 
\begin{displaymath}
    \nabla f(u)=2WA^\ast(AW^\ast u-x_0)\quad\text{and}\quad (\prox_{\alpha\lambda g}(u))_i=
    \left\{
    \begin{array}{lcl}
       u_i+\alpha\lambda  &\quad&  u_i\leq -\alpha\lambda \\
       0  &&  |u_i|\leq \alpha\lambda\\
       u_i-\alpha\lambda  &&  u_i\geq \alpha\lambda \\
    \end{array}.
    \right.
\end{displaymath}
The starting point of the algorithm is the noisy blurry image in the wavelet domain, $u_0 = W x_0$.
The step size is chosen as $\alpha = 1/L$, where the Lipschitz constant $L$ of the function $f$ is  $L = 2 \lVert A W^* \rVert_2^2 = 2 \lVert A \rVert_2^2$ (since the wavelets form an orthogonal basis).

\subsection{Approximating the trade-off curve}

In order to find an adequate regularization parameter $\lambda$ for the classical proximal-gradient algorithm~\eqref{eq:proxgrad}, it is necessary to run the algorithm for several values of the parameter, plot the trade-off curve and choose a value that provides a good balance between the data mismatch and regularization. As already mentioned this is known as the L-curve method in the literature \cite{lawson1995, Hansen2001}. However, running the optimization algorithm (\ref{eq:proxgrad}) several times might be very time consuming.
Therefore a speed-up may be obtained if the trade-off curve can be generated by just running the algorithm once with a fitting sequence for the regularization parameter ($\lambda_n$ instead of a fixed value).

To demonstrate this behaviour, we first generate the trade-off curve by running the algorithm (\ref{eq:proxgrad}) multiple times with a fixed regularization parameter chosen from the interval $[10^{-3}, 10^{-1}]$ to have a reference for comparison. Then we choose three different sequences $(\lambda_n)_{n\in\N}$ with the same starting point converging to the ``optimal" parameter value $\lambda$ that we determined using the L-curve method. Our test sequences are
\begin{align}
\label{eq:lambdaSeq1}
    \lambda^1_n &= \lambda (1 + \frac{\beta}{n^\theta}) \quad \text{with } \theta = 1.01, \beta = 9 \\
\label{eq:lambdaSeq2}
    \lambda^2_n &= \max (\lambda, \mu \beta^n) \quad \text{with } \beta = 0.99, \mu = 10 \lambda \\
\label{eq:lambdaSeq3}
    \lambda^3_n &= \lambda (1 + \mu \beta^n) \quad \text{with } \beta = 0.9, \mu = 9.
\end{align}
As can be seen in Figure~\ref{fig:CurveNumeric} these sequences already cover parts of the trade-off curve as opposed to choosing a constant parameter. Since all sequences converge to $\lambda$ it is not surprising that they all stop at the same point of the curve.

\begin{figure}[h]
\centering\includegraphics[width=\textwidth]{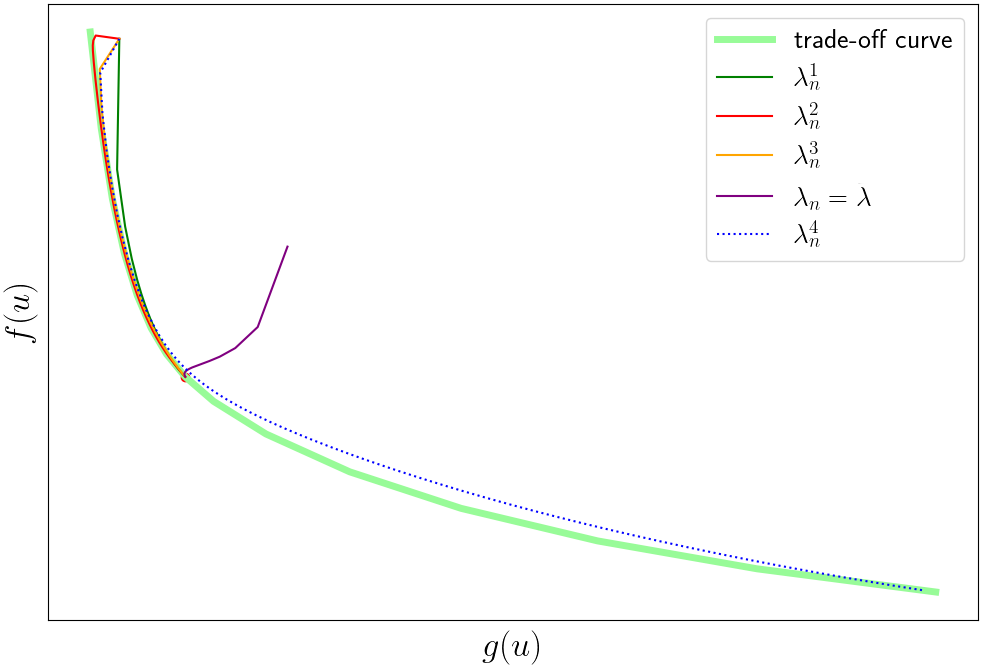}
\caption{Paths of the different sequences in the $g-f$ plane compared to the trade-off curve}\label{fig:CurveNumeric}
\end{figure}

However, in practice these sequences are not really applicable since the optimal value needs to be known beforehand. Therefore we tried a fourth sequence similar to the third one~\eqref{eq:lambdaSeq3} with $\lambda^4_0 = 10^{-1}$ defined by
\begin{align}
    \lambda^4_n = 10^{-3} (1 + 99 \cdot 0.9^n)
\end{align}
that converges to $10^{-3}$ covering a wider range of possible regularization parameters.
This sequence's path does not follow the trade-off curve perfectly but it is quite close (see Figure~\ref{fig:CurveNumeric}), such that by only executing the algorithm once an approximation of the trade-off curve can be generated.

\section{Conclusions}\label{sec:conclusions}

A proof of convergence of an iterative optimization algorithm for the composite problem (\ref{eq:problem}) was given. A special case of the algorithm of interest has already been proposed \cite{Hale2008} but no proof of convergence was given. In addition, we derived a convergence rate estimate. We also highlight the relation with the so-called inexact proximal-gradient methods, in particular with algorithms based on the notion of $\epsilon$-subdifferential.

The advantage of the proposed method (with varying $\lambda_n$) is not that it necessarily converges faster than the usual proximal gradient algorithm, but that it traces out a more interesting path in the penalty-misfit plane. In this way, an approximation of the trade-off curve can be made at the cost of computing just a single minimizer, and the intermediate iterates $u_n$ are of some use for balancing the data mismatch and regularization terms.

Of course searching through a very wide range of possible values for the parameter may not be reliable, but at least in our test case covering few different orders of magnitude (e.g. $\lambda_n \in [10^{-1}, 10^{-3}]$) was not a problem. The particular choice of the sequence did not seem to have noticeable effect on the behaviour.

\begin{acknowledgement}
The authors would like to thank the organizers and participants of the workshop on Advanced Techniques in Optimization for Machine learning and Imaging (ATOMI, Rome, 20-24 June, 2022) during which the present work was initiated. This work was supported by the Fonds de la Recherche Scientifique - FNRS under Grant CDR J.0122.21. LR was supported by the Air Force Office of Scientific Research under award number FA8655-20-1-7027, and acknowledges the support of
Fondazione Compagnia di San Paolo.  
\end{acknowledgement}

\bibliography{main.bib}{}
\bibliographystyle{spmpsci}

\end{document}